\newtheorem{theorem}{Theorem}[section]
\newtheorem{proposition}{Proposition}[section]
\numberwithin{equation}{section}
\theoremstyle{definition}
\newtheorem*{remark}{Remark}
\newcommand{\R}{\mathbb{R}}
\renewcommand{\S}{\mathbb{S}}
\newcommand{\ud}{\mathrm{d}}
\newcommand{\eps}{\varepsilon}
\newcommand{\A}{\mathscr{A}_{\eps(\cdot)}}
\newcommand{\M}{\mathscr{M}}
\title[Maximal Fourier restriction for spheres]{A note on maximal Fourier Restriction for spheres in all dimensions}
\author{ Marco Vitturi }
\address{Marco Vitturi, Laboratoire de  Math\'{e}matiques Jean Leray, 2, rue de la Houssini\`ere BP 92208
44322 Nantes Cedex 3.}
\email{marco.vitturi@univ-nantes.fr}
\date{March 28, 2017}
\subjclass{42B10, 42B25}
\keywords{Fourier restriction, maximal operators}
\begin{document}

\begin{abstract}
We prove a maximal Fourier restriction theorem for the sphere $\mathbb{S}^{d-1}$ in $\mathbb{R}^{d}$ for any dimension $d\geq 3$ in a restricted range of exponents given by the Stein-Tomas theorem. The proof consists of a simple observation. When $d=3$ the range corresponds exactly to the full Stein-Tomas one, but is otherwise a proper subset when $d>3$. We also present an application regarding the Lebesgue points of functions in $\mathcal{F}(L^p)$ when $p$ is sufficiently close to 1.
\end{abstract}

\maketitle

%%%%%%%
% introduction
%%%%%%
\section{introduction}
Very recently a new line of investigation in the field of Fourier restriction studies has been opened by M\"{u}ller, Ricci and Wright in \cite{MullerRicciWright}, namely that of maximal Fourier restriction theorems. The goal of such line of investigation is to study the Lebesgue points of the Fourier transform $\widehat{f}$ of a generic function $f \in L^p$ when $1\leq p \leq 2$ and sufficiently close to $1$. In the aforementioned paper they prove that for the case of curves in $\R^2$ the following holds:
\begin{theorem}[\cite{MullerRicciWright}]\label{MullerRicciWright_thm1}
Let $\Gamma$ be a $C^2$ curve in $\R^2$ and let $f \in L^p$ with $1\leq p < 8/7$. Then, with respect to arclength measure, a.e. point of $\Gamma$ where the curvature does not vanish is a Lebesgue point of $\widehat{f}$.
\end{theorem}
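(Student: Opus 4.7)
The plan is to establish a \emph{maximal} version of the Fefferman--Zygmund restriction theorem for $\Gamma$, and then deduce the Lebesgue point statement by the usual density/approximation argument. After a standard reduction (partition-of-unity, discarding the closed set where curvature vanishes, a local chart) we may assume $\Gamma$ is a compact arc of $C^2$ regularity on which the curvature is bounded below, so that the Fefferman--Zygmund estimate
$$\|\widehat{g}\|_{L^2(\Gamma,\ud \sigma)} \lesssim \|g\|_{L^p(\R^2)}, \qquad 1\leq p\leq 4/3,$$
is available.

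Introduce the averages $A_r \widehat{f}(\xi) := (\widehat{f}*\phi_r)(\xi)$ where $\phi$ is a fixed Schwartz bump with $\int\phi = 1$ and $\phi_r(\eta)=r^{-2}\phi(\eta/r)$. The decisive identity, by Fourier inversion, is
$$A_r \widehat{f}(\xi) \;=\; \int_{\R^2} e^{-2\pi i x\cdot \xi}\,\widehat{\phi}(rx)\,f(x)\,\ud x \;=\; \widehat{g_r}(\xi), \quad g_r(x):=\widehat{\phi}(rx)\,f(x).$$
Since $\|\widehat{\phi}\|_{\infty}\le \|\phi\|_1=1$, we have $|g_r|\leq |f|$ pointwise, and so Fefferman--Zygmund gives the uniform-in-$r$ bound $\|A_r\widehat{f}\|_{L^2(\Gamma)} \lesssim \|f\|_{L^p}$ for all $p\le 4/3$. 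This is the easy half; the task is to pass to the supremum in $r$.

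The heart of the proof is therefore the maximal inequality
$$\Bigl\|\sup_{r>0} |A_r \widehat{f}|\Bigr\|_{L^2(\Gamma)} \lesssim \|f\|_{L^p}, \qquad 1\leq p<8/7.$$
I would linearize the sup by choosing a measurable selector $\xi\mapsto r(\xi)$ and, after restricting to dyadic scales $r_k=2^{-k}$, telescope
$$A_{r(\xi)}\widehat{f}(\xi) \;=\; \widehat{f}(\xi) + \sum_{k \in \Z} \bigl(A_{r_{k+1}}\widehat{f}(\xi)-A_{r_k}\widehat{f}(\xi)\bigr)\mathbf{1}_{E_k}(\xi),$$
where $\{E_k\}$ is the partition of $\Gamma$ induced by $r(\xi)$. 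Each single-scale difference is again a Fourier restriction, this time of $f$ multiplied by a symbol $\widehat{\phi}(r_{k+1}x)-\widehat{\phi}(r_k x)$ supported (essentially) in a dyadic annulus $|x|\sim r_k^{-1}$. To control the $\ell^2$ sum of such terms in $L^2(\Gamma)$, I would interpolate the Fefferman--Zygmund bound against the trivial $\ell^\infty$-$L^\infty$ bound coming from $\|\widehat{g}\|_\infty\leq \|g\|_1$, which produces an $\ell^2$-valued restriction estimate at the cost of narrowing the range of $p$. The threshold $8/7$ is precisely what this interpolation yields when one asks the summation over $k$ to converge against the dyadic-annulus decomposition.

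Once the maximal inequality is in hand, the Lebesgue point conclusion follows in the classical way. For $f\in\mathscr{S}(\R^2)$ dominated convergence gives $A_r\widehat{f}(\xi)\to\widehat{f}(\xi)$ for every $\xi$. Writing a general $f\in L^p$ as a Schwartz approximation plus an error $h$ of small $L^p$ norm, the maximal inequality applied to $h$ shows that the set $\{\xi\in\Gamma:\limsup_{r\to 0}|A_r\widehat{f}(\xi)-\widehat{f}(\xi)|>\lambda\}$ has arclength measure bounded by $C\lambda^{-2}\|h\|_{L^p}^2$, and then letting $\|h\|_{L^p}\to 0$ gives convergence $\sigma$-a.e. on $\Gamma$. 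I expect the main obstacle to be the $\ell^2$ square-function step: the interpolation must be done carefully enough to both handle the dyadic annuli in $x$-space and keep the curve-restriction exponent on $\Gamma$ equal to $2$, and it is here that the genuine range of $p$ shrinks from $4/3$ to $8/7$.
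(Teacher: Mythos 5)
There is a genuine gap, and it sits exactly where the exponent $8/7$ comes from. Your maximal operator $\sup_{r>0}|A_r\widehat{f}|$ carries the absolute value \emph{outside} the average. A bound for it, fed into your final density argument, yields only $\sigma$-a.e.\ convergence of the mollified averages $A_r\widehat{f}(\xi)\to\mathcal{R}f(\xi)$ --- a strictly weaker conclusion than the theorem, which asserts Lebesgue points, i.e.\ $r^{-2}\int_{|y|\leq r}|\widehat{f}(\xi+y)-\mathcal{R}f(\xi)|\,\ud y\to 0$. For that you must control the \emph{positive} maximal operator $\M^{+}f(\xi)=\sup_{r}r^{-2}\int_{|y|\leq r}|\widehat{f}(\xi+y)|\,\ud y$, and your entire mechanism (the identity $A_r\widehat{f}=\widehat{g_r}$, the telescoping into single-scale multipliers $\widehat{\phi}(r_{k+1}x)-\widehat{\phi}(r_k x)$) relies on Fourier-side multiplier structure that is destroyed the moment the absolute value moves inside the integral. (A smaller but symptomatic problem: anchoring your telescoping at $\widehat{f}(\xi)$ for $\xi\in\Gamma$ is circular, since $\widehat{f}|_\Gamma$ is not defined pointwise a priori --- defining it is the restriction problem itself; one must anchor at a fixed unit scale instead.)

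The actual argument, reproduced in this paper's Proposition \ref{Lebesgue_Points_sphere}, runs differently: one first proves the \emph{signed} maximal estimate in the full restriction range $p<4/3$ (Theorem \ref{MullerRicciWright_thm2}, via Sj\"olin's method; here, for spheres, via the even-exponent Stein--Tomas trick), and then upgrades to $\M^{+}$ by Cauchy--Schwarz together with the identity $|\widehat{f}|^2=\widehat{h}$ for $h=f\ast\tilde{f}$, giving $\M^{+}f\lesssim(\M h)^{1/2}$ pointwise. Young's inequality puts $h\in L^s$ with $1+\tfrac1s=\tfrac2p$, and applying the signed maximal estimate to $h$ requires $s\leq 4/3$, which is precisely $p\leq 8/7$. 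So $8/7$ is produced by this squaring-plus-Young step, not by any $\ell^2$-valued interpolation of annular pieces; your claim that ``the threshold $8/7$ is precisely what this interpolation yields'' is unsubstantiated, and the interpolation itself looks unworkable as sketched: summing the single-scale restriction bounds demands an $\ell^1$ sum over dyadic annuli in $x$-space, whereas $\|f\|_{L^p}$ controls only the $\ell^p$ sum of annular pieces, and Knapp-type examples modulated to live at scale $|x|\sim 2^k$ for every $k$ show there is no decay in $k$ to rescue the summation. Your ``easy half'' (the uniform-in-$r$ bound via $|g_r|\leq|f|$) is correct but does not engage the difficulty.
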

%Recall that $x$ is a Lebesgue point for a function $F$ if 
%\[ \lim_{\varepsilon \rightarrow 0} \frac{1}{\eps^2} \int \int_{|s|,|t|\leq \eps} |F(x) - F(y)|\ud y = 0. \]
In particular, if $\mathcal{R}$ is the Fourier restriction operator associated to $\Gamma$, one has from the above that for all $f \in L^p$ with $1\leq p <8/7$
\[ \mathcal{R}f = \widehat{f}\big|_{\Gamma} \]
for a.e. point of $\Gamma$ where the curvature is non zero (a.e. with respect to arclength measure). Theorem \ref{MullerRicciWright_thm1} is the consequence of a clever trick (which we have included in the proof of Proposition \ref{Lebesgue_Points_sphere}, for the reader's convenience) and the following
\begin{theorem}[\cite{MullerRicciWright}]\label{MullerRicciWright_thm2}
Let $\Gamma$ be the graph of a $C^2$ function $\gamma \,:\, I \rightarrow \R$, where $I$ is a bounded interval, and let $\mu$ denote the Affine measure on $\Gamma$, which for $\Gamma$ in this form is given by
\[ \ud\mu ((\xi,\gamma(\xi))) = |\gamma''(\xi)|^{1/3} \ud \xi. \]
Let $\chi \in \mathscr{S}(\R^2)$ be a fixed Schwartz function with $\int \chi = 1$ and define the maximal Fourier restriction operator 
\[ \mathcal{M} f(\xi) := \sup_{\varepsilon,\delta > 0} \Big|\int \int_{\R^2} \widehat{f}(\xi + s, \gamma(\xi) + t) \chi_{\eps}(s) \chi_{\delta}(t) \ud s \ud t\Big|. \]
Then the estimate 
\begin{equation}\label{max_estimate_d=2}
 \|\mathcal{M} f\|_{L^q(\Gamma, \ud \mu)} \lesssim_{p,q} \|f\|_{L^p(\R^2)} 
 \end{equation}
holds for all $f\in L^p$ with $1\leq p < 4/3$ and $p' \geq 3q$. 
\end{theorem}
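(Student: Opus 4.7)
The plan is to rewrite the inner integral via Parseval as a Fourier restriction of $f$ times a bounded smooth multiplier, and then to pass to the supremum in $(\eps,\delta)$ by a fundamental-theorem-of-calculus argument in the scale parameters, exploiting a mean-zero cancellation to make the resulting scale integral integrable. By Parseval, for each $\eps,\delta>0$,
\[
A_{\eps,\delta}(\xi):=\iint \widehat{f}(\xi+s,\gamma(\xi)+t)\chi_\eps(s)\chi_\delta(t)\,\ud s\,\ud t
=\int_{\R^2}f(x,y)\widehat{\chi}(\eps x)\widehat{\chi}(\delta y)\,e^{-2\pi i(\xi x+\gamma(\xi)y)}\,\ud x\,\ud y,
\]
which is the Fourier restriction to $\Gamma$ of the function $f(x,y)\widehat{\chi}(\eps x)\widehat{\chi}(\delta y)$. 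Since $\|\widehat{\chi}\|_\infty<\infty$, the affine restriction theorem of Sj\"olin and Drury for $C^2$ curves provides the \emph{single-scale bound} $\|A_{\eps,\delta}\|_{L^q(\ud\mu)}\lesssim \|f\|_{L^p}$ uniformly in $(\eps,\delta)$, throughout the range $1\le p<4/3$, $p'\ge 3q$ (at the endpoint $p'=3q$ this is Sj\"olin's theorem, and $p'>3q$ follows by H\"older, using that $I$ is bounded). The question is how to upgrade this to a bound on the supremum $\mathcal{M}f$.

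For this we use a fundamental theorem of calculus in $(\eps,\delta)$. Since $\widehat{\chi}\in\mathscr{S}(\R)$ and $\widehat{\chi}(0)=\int\chi=1$, one checks that $A_{\eps,\delta}(\xi)\to 0$ as $\eps\to\infty$ or $\delta\to\infty$, hence
\[
A_{\eps,\delta}(\xi) = \int_\eps^\infty\int_\delta^\infty \partial_\sigma\partial_\tau A_{\sigma,\tau}(\xi)\,\ud\sigma\,\ud\tau.
\]
A direct calculation yields $\partial_\sigma\chi_\sigma=-\sigma^{-1}\psi_\sigma$ for the Schwartz function $\psi(u):=\chi(u)+u\chi'(u)$, which is crucially \emph{mean-zero}: $\widehat{\psi}(0)=\int\psi=0$. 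Consequently
\[
\partial_\sigma\partial_\tau A_{\sigma,\tau}(\xi) = (\sigma\tau)^{-1}\,\mathcal{R}\!\bigl[f(x,y)\widehat{\psi}(\sigma x)\widehat{\psi}(\tau y)\bigr](\xi),
\]
and the vanishing at $0$ gives the pointwise control $|\widehat{\psi}(\sigma x)|\lesssim \min\{\sigma|x|,(1+\sigma|x|)^{-N}\}$ for every $N$. Taking $L^q(\ud\mu)$-norms and applying Minkowski's inequality together with the single-scale restriction bound yields
\[
\|\mathcal{M}f\|_{L^q(\ud\mu)} \le C\int_0^\infty\!\!\int_0^\infty (\sigma\tau)^{-1}\bigl\|f(x,y)\widehat{\psi}(\sigma x)\widehat{\psi}(\tau y)\bigr\|_{L^p}\,\ud\sigma\,\ud\tau,
\]
and this right-hand side is estimated by $\|f\|_{L^p}$ via a dyadic decomposition of $f$ in the $(x,y)$-variables, using the slack $p'\ge 3q$ and H\"older on the compact interval $I$ to close the summation.

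The main technical obstacle is the convergence of the scale integral on the right-hand side. Without the mean-zero cancellation, the crude bound $\|\widehat\chi(\sigma\cdot)\|_{L^\infty}\lesssim 1$ yields only the factor $(\sigma\tau)^{-1}$, which is non-integrable at both $0$ and $\infty$. The vanishing $\widehat{\psi}(0)=0$ arising from differentiation in the scale, combined with the Schwartz decay of $\widehat{\psi}$ at infinity, is exactly what produces the $\min$-type pointwise bound that makes the $\ud\sigma\,\ud\tau$-integral converge at both ends. Closing the dyadic summation in $(x,y)$ without logarithmic losses, for the full range $p'\ge 3q$ rather than merely at the endpoint, is the remaining delicate point.
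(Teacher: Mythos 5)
Your calculus in the scale parameters is correct as far as it goes: the Parseval rewriting, the identity $\partial_\sigma \chi_\sigma = -\sigma^{-1}\psi_\sigma$ with $\psi(u)=\chi(u)+u\chi'(u)$ mean-zero (equivalently $\widehat{\psi}(u)=-u\,\widehat{\chi}'(u)$), and the pointwise bound $|\widehat{\psi}(u)|\lesssim\min\{|u|,(1+|u|)^{-N}\}$ are all fine, and they do make each \emph{fixed spatial block's} scale integral converge. But the final step is a genuine gap, not merely a ``remaining delicate point'': the quantity
\[
\int_0^\infty\!\!\int_0^\infty (\sigma\tau)^{-1}\,\bigl\|f(x,y)\,\widehat{\psi}(\sigma x)\,\widehat{\psi}(\tau y)\bigr\|_{L^p}\,\ud\sigma\,\ud\tau
\]
is \emph{not} controlled by $\|f\|_{L^p}$ for any $p>1$, so no dyadic bookkeeping can close it. Indeed, take $f=\sum_{1\le j,k\le N}f_{j,k}$ with $f_{j,k}$ supported where $|x|\sim 2^j$, $|y|\sim 2^k$ and $\|f_{j,k}\|_{L^p}=N^{-2/p}$, so that $\|f\|_{L^p}\sim 1$. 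At scales $\sigma\sim 2^{-j_0}$, $\tau\sim 2^{-k_0}$ the norm $\|f\,\widehat{\psi}(\sigma\cdot)\widehat{\psi}(\tau\cdot)\|_{L^p}$ is $\sim N^{-2/p}$ (the matching block dominates, the others decay geometrically), and integrating $(\sigma\tau)^{-1}\ud\sigma\,\ud\tau$ over the $\sim N\times N$ logarithmic range of active scales produces $\sim N^2\cdot N^{-2/p}=N^{2/p'}\to\infty$. The structural reason is that Minkowski's inequality in $(\sigma,\tau)$ replaces the supremum (which picks \emph{one} scale pair per point $\xi$) by an $\ell^1$ sum over scales of $L^p$ norms, and $\ell^p\not\subset\ell^1$ for $p>1$; your scheme closes only at $p=1$. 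This is exactly why maximal restriction does not follow from linear restriction by a soft fundamental-theorem-of-calculus (i.e.\ $V^1$-variation) argument; the later abstract implication of Kova\v{c} deducing maximal from linear restriction needs Christ--Kiselev/variational machinery in place of your total-variation bound.

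For comparison: the paper itself does not prove this theorem --- it quotes it from M\"uller--Ricci--Wright, whose proof is not a black-box reduction at all but runs the maximal truncations \emph{through} Sj\"olin's proof of the linear restriction estimate. The paper's own technique for the sphere (Proposition \ref{max_estimate_proposition}) also points to the standard repair of your approach: linearize the supremum by measurable stopping functions $\eps(\xi),\delta(\xi)$ and prove a bound for the linearized operator uniform in the linearization, so that the mollifiers at the chosen scales are absorbed pointwise (there via $|\tilde{h}\ast\chi_{\eps(\omega)}\ast\chi_{\eps(\omega')}|\lesssim M^2\tilde{h}$ inside the dual pairing) with no integration over scales and hence no $\ell^1$ loss. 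Note, however, that the paper's $L^4$/Plancherel mechanism is unavailable in $d=2$, where the relevant endpoint $L^{4/3}\rightarrow L^{4/3}$ fails by Beckner--Carbery--Semmes--Soria, so in two dimensions one really must follow Sj\"olin's argument as M\"uller--Ricci--Wright do, rather than deduce the theorem from the linear estimate.
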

We have used in the statement the standard notation
\[ \chi_\eps (x) = \frac{1}{\eps} \chi\big(\frac{x}{\eps}\big). \]
Observe that the range of exponents for which \eqref{max_estimate_d=2} holds is the same as that for the usual operator of Fourier restriction to $\Gamma$. The proof of the above theorem follows the lines of Sj\"{o}lin's proof of the Fourier restriction conjecture for curves in the plane as given in \cite{Sjolin}.\\
In this short note we consider the case of Fourier restriction to the sphere $\S^{d-1}$ immersed in $d$-dimensional euclidean space, $d\geq 3$. Let $\ud \sigma$ denote the surface measure of the sphere. Define, analogously to the above, the maximal Fourier restriction operator for the sphere
\begin{equation*}
\mathscr{M} f(\omega) := \sup_{\eps>0} \Big|\int \widehat{f}(\omega + y) \chi_\eps (y) \ud y\Big|, 
\end{equation*}
where $\omega$ ranges over $\S^{d-1}$ and $\chi \in \mathscr{S}(\R^d)$ is a fixed Schwartz function with $\int \chi =1$, $\chi_\eps(y) = \eps^{-d} \chi(y / \eps)$. Then we have the following
\begin{theorem}\label{main_theorem}
Let $d\geq 3$. The operator $\M$ satisfies 
\begin{equation}\label{max_estimate_sphere} \| \M f\|_{L^q(\S^{d-1}, \ud \sigma)} \lesssim_{p,q,d} \|f\|_{L^p(\R^d)}
\end{equation}
for $1\leq p \leq 4/3$, $p' \geq \frac{d+1}{d-1}q$.\\
Moreover, for $1\leq p \leq 8/7$, if $f\in L^p(\R^d)$ then $\sigma$-a.e. point of $\S^{d-1}$ is a Lebesgue point for $\widehat{f}$.
\end{theorem}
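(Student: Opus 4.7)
My plan is to reduce the maximal estimate to the standard Stein--Tomas theorem by rewriting each $T_\eps f(\omega):=\int\widehat{f}(\omega+y)\chi_\eps(y)\,\ud y$. A direct application of Fubini gives
\[
T_\eps f(\omega) = \int f(x)\,e^{-2\pi i\omega\cdot x}\,\widehat{\chi}(\eps x)\,\ud x = \bigl(f\cdot\widehat{\chi}(\eps\cdot)\bigr)^\wedge(\omega),
\]
which exhibits $T_\eps f$ as the standard Stein--Tomas restriction of the modulated function $f\cdot\widehat{\chi}(\eps\cdot)$. Since $\|\widehat{\chi}\|_\infty<\infty$, Stein--Tomas yields $\|T_\eps f\|_{L^q(\S^{d-1})}\lesssim_{p,q,d}\|f\|_{L^p}$ uniformly in $\eps$ throughout the Stein--Tomas range; the only challenge is to pass from this uniform bound to control of the supremum $\M f(\omega)=\sup_\eps|T_\eps f(\omega)|$.

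To handle the supremum I would apply the fundamental theorem of calculus in $\eps$. For $f$ in a dense subspace (say $\mathscr{S}(\R^d)$), dominated convergence together with $\widehat{\chi}(\eps x)\to 0$ for $x\neq 0$ gives $T_\eps f(\omega)\to 0$ as $\eps\to\infty$, and hence
\[
\M f(\omega) \leq \int_0^\infty |\partial_t T_t f(\omega)|\,\ud t.
\]
A direct computation shows $\partial_t T_t f(\omega) = t^{-1}\bigl(f\cdot\psi(t\cdot)\bigr)^\wedge(\omega)$, where $\psi(y):=y\cdot\nabla\widehat{\chi}(y)$ is Schwartz and crucially vanishes at the origin. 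Minkowski's integral inequality in $L^q(\S^{d-1})$ followed by Stein--Tomas pointwise in $t$ then gives
\[
\|\M f\|_{L^q(\S^{d-1})} \lesssim \int_0^\infty t^{-1}\|f\cdot\psi(t\cdot)\|_{L^p(\R^d)}\,\ud t.
\]

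The main obstacle is to bound this $t$-integral by $\|f\|_{L^p}$. The zero of $\psi$ at the origin gives $|\psi(tx)|\lesssim t|x|$ for small $t|x|$, while Schwartz decay gives $|\psi(tx)|\lesssim_N (t|x|)^{-N}$ for large $t|x|$; splitting the integral at $t|x|=1$ and applying Fubini should allow one to integrate the scale variable out in a scale-invariant manner. This estimate, together with the range of validity of Stein--Tomas, should dictate the admissible pairs $(p,q)$, in particular the restriction $p\leq 4/3$.

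For the Lebesgue-point conclusion I would invoke Proposition \ref{Lebesgue_Points_sphere}, which packages the M\"{u}ller--Ricci--Wright trick. Given a maximal estimate of Part~1 at some $(p,q)$ with $1\leq p\leq 8/7$ that also lies in the Stein--Tomas range, one approximates $f\in L^p$ by Schwartz functions $f_n$---for which $\widehat{f_n}$ is smooth and every point of $\S^{d-1}$ is trivially a Lebesgue point---and controls the error $\widehat{f-f_n}$ simultaneously by $\M$ and by the ordinary Stein--Tomas restriction, yielding $\sigma$-a.e.\ Lebesgue points of $\widehat{f}$. The threshold $p\leq 8/7$ emerges from the requirement that both operators give usable $L^q(\S^{d-1})$ bounds on the error.
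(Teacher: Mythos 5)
Your reduction of the single-scale operator is fine: $T_\eps f = \bigl(f\,\widehat{\chi}(\eps\cdot)\bigr)^{\wedge}\big|_{\S^{d-1}}$, the uniform-in-$\eps$ bound follows from Stein--Tomas, and the Fatou-type extension from a dense class is unproblematic. The genuine gap is the step you flag as "the main obstacle": the inequality $\int_0^\infty t^{-1}\|f\,\psi(t\cdot)\|_{L^p(\R^d)}\,\ud t \lesssim \|f\|_{L^p}$ is \emph{false} for every $p>1$ (and for $p=1$ the maximal estimate is trivial, so nothing is gained). To see this, decompose $f=\sum_j f_j$ with $f_j$ supported in $\{|x|\sim 2^j\}$, on sets where $|\psi(tx)|\gtrsim 1$ for $t\sim 2^{-j}$ (the zero set of $\psi=y\cdot\nabla\widehat{\chi}(y)$ can be avoided in some fixed direction). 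The shell $f_j$ contributes $\gtrsim \|f_j\|_{L^p}$ to the $t$-integral from its own critical scale $t\sim 2^{-j}$, so the right-hand side dominates the $\ell^1$ sum $\sum_j \|f_j\|_{L^p}$, whereas $\|f\|_{L^p}=(\sum_j\|f_j\|_{L^p}^p)^{1/p}$ is an $\ell^p$ sum; taking $\|f_j\|_{L^p}=j^{-1/p}(\log j)^{-2/p}$ makes your intermediate quantity infinite while $f\in L^p$. The vanishing of $\psi$ at the origin buys summability of a \emph{fixed} shell over all scales, but not summability over shells each evaluated at its own scale: this is the $\ell^p\not\hookrightarrow\ell^1$ obstruction that defeats the naive fundamental-theorem-of-calculus trick for maximal functions whenever $p>1$. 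A telltale sign is that nothing in your sketch actually uses $p\leq 4/3$: if the scheme worked it would give the maximal estimate in the full Stein--Tomas range $p\leq \frac{2(d+1)}{d+3}$, which is precisely what the paper does \emph{not} claim for $d>3$.

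The paper absorbs the supremum on the dual side instead, and this is the idea your proposal is missing. One linearises with a measurable $\eps(\omega)$, passes to the adjoint, and exploits that the dual exponent at $p=4/3$ is the even integer $4$, so Stein--Tomas can be recast in the cancellation-free bilinear form \eqref{steintomas_combinatorial}; the mollifications then appear only through the kernel $(\tilde{h}\ast\chi_{\eps(\omega)}\ast\chi_{\eps(\omega')})(\omega'-\omega)$, which is dominated pointwise by $M^2\tilde{h}(\omega'-\omega)$ \emph{uniformly in} $\eps(\cdot)$, and the Hardy--Littlewood maximal operator is harmless because the cancellation-free form places $h$ in $L^2$. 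Two further points on your second part: invoking Proposition \ref{Lebesgue_Points_sphere} is circular, since that proposition \emph{is} the second half of the theorem; and your sketch of the M\"{u}ller--Ricci--Wright argument omits its key step. Lebesgue points require the positive operator $\M^{+}$, with absolute values inside the average, which does not follow from bounds on the smoothed, signed $\M$. The paper obtains it via Cauchy--Schwarz together with the identity $\widehat{f\ast\tilde{f}}=|\widehat{f}|^2$, giving $\M^{+}f\lesssim(\M h)^{1/2}$ with $h=f\ast\tilde{f}$, and then applies Proposition \ref{max_estimate_proposition} to $h\in L^s$ with $1+\frac1s=\frac2p$ by Young's inequality; the constraint $s\leq 4/3$ is exactly where the threshold $p\leq 8/7$ comes from, not from a simultaneous-usability requirement on the two operators.
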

Observe that when $d=3$ the range of exponents for which \eqref{max_estimate_sphere} holds has endpoint $L^{4/3}\rightarrow L^2$, and thus corresponds exactly to the Stein-Tomas range for this dimension. For larger values of $d$, the stated range is however only a subset of the full Stein-Tomas range, which is 
\[ 1\leq p \leq \frac{2(n+1)}{n+3}, \quad p' \geq \frac{d+1}{d-1}q.\]
It is precisely the fact that the adjoint estimate to $L^{4/3}\rightarrow L^{q}$ is $L^{q'}\rightarrow L^4$ that allows for a simple proof of the theorem, since $4$ is an even integer and thus the restriction estimate can be restated in cancellation-free form as in \eqref{steintomas_combinatorial} below. Indeed, one can prove the Stein-Tomas theorem in $d=3$ just by the coarea formula (see \cite{DOberlin}).\\
The author is indebted to Diogo Oliveira e Silva for suggesting to look at the special exponents under consideration here.

%%%%%%
% proofs
%%%%%%
\section{Proof of the result}
We divide the proof of Theorem \ref{main_theorem} in two by proving separately two propositions. First we prove 
\begin{proposition}\label{max_estimate_proposition}
Let $d\geq 3$. The operator $\M$ satisfies 
\begin{equation}\label{max_estimate_sphere} \| \M f\|_{L^q(\S^{d-1}, \ud \sigma)} \lesssim_{p,q,d} \|f\|_{L^p(\R^d)}
\end{equation}
for $1\leq p \leq 4/3$, $p' \geq \frac{d+1}{d-1}q$.
\end{proposition}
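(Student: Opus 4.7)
The plan is to linearize the supremum in $\M$, reduce to the single endpoint $p=4/3$ by interpolation against a trivial bound, and then handle that endpoint via duality combined with the \emph{cancellation-free} form of Stein--Tomas. Writing $\widehat{g\sigma}(x):=\int_{\S^{d-1}}g(\omega)e^{2\pi i\omega\cdot x}d\sigma(\omega)$ for the extension operator, the version of Stein--Tomas the argument relies on is
$$\int_{(\S^{d-1})^4}|g|^4\,\delta(\omega_1+\omega_2-\omega_3-\omega_4)\,d\sigma^4 \;=\;\bigl\|\widehat{|g|\sigma}\bigr\|_{L^4(\R^d)}^{4}\;\lesssim\;\|g\|_{L^{q'}(\sigma)}^{4}$$
at the endpoint $q=4(d-1)/(d+1)$; for $d=3$ this admits Oberlin's coarea-formula proof referenced in the introduction.

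For measurable $\eps:\S^{d-1}\to(0,\infty)$ it suffices to bound $\A f(\omega)=\int\widehat f(\omega+y)\chi_{\eps(\omega)}(y)\,dy$ uniformly in $\eps(\cdot)$. The trivial estimate $|\A f|\leq\|\widehat f\|_\infty\|\chi\|_1\lesssim\|f\|_1$ yields $\A:L^1\to L^\infty(\sigma)$; Riesz--Thorin interpolation against the endpoint $p=4/3$ then produces the full stated range. By Parseval $\A f(\omega)=\int f(x)e^{-2\pi i\omega\cdot x}\widehat\chi(\eps(\omega)x)\,dx$, and, choosing $\chi$ to be a nonnegative even Gaussian so that $\widehat\chi$ is also nonnegative, the adjoint is
$$\A^{*}g(x)=\int_{\S^{d-1}}g(\omega)\,\widehat\chi(\eps(\omega)x)\,e^{2\pi i\omega\cdot x}\,d\sigma(\omega),$$
and the endpoint reduces to $\|\A^{*}g\|_{L^4(\R^d)}\lesssim\|g\|_{L^{q'}(\sigma)}$. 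Squaring and integrating in $x$ first exploits the evenness of the exponent~$4$: the $x$-integral collapses to a fourfold convolution, producing
$$\|\A^{*}g\|_4^4 = \int_{(\S^{d-1})^4} g(\omega_1)\,g(\omega_2)\,\overline{g(\omega_3)\,g(\omega_4)}\;K_\eps(\omega)\,d\sigma^4,$$
where $K_\eps(\omega):=\bigl(\chi_{\eps(\omega_1)}*\chi_{\eps(\omega_2)}*\chi_{\eps(\omega_3)}*\chi_{\eps(\omega_4)}\bigr)(\omega_1+\omega_2-\omega_3-\omega_4)\geq0$.

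Taking absolute values inside, it remains to prove $\int|g|^4K_\eps\,d\sigma^4\lesssim\|g\|_{L^{q'}(\sigma)}^4$. The main observation is that $K_\eps$ is a probability density in the difference $\xi=\omega_1+\omega_2-\omega_3-\omega_4$, behaving as an approximate identity replacing the $\delta(\xi)$ in the cancellation-free form. The push-forward of $|g|^4\,d\sigma^4$ under the map $\omega\mapsto\omega_1+\omega_2-\omega_3-\omega_4$ has Fourier transform $\bigl|\widehat{|g|\sigma}\bigr|^4$, which by Stein--Tomas lies in $L^1(\R^d)$ with norm bounded by $\|g\|_{L^{q'}(\sigma)}^4$; thus this push-forward has an $L^\infty$-bounded density, and pairing it against the $L^1$-normalized $K_\eps$ yields the desired estimate. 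The subtle step---and the one I expect to be the crux---is that the scales $\eps(\omega_j)$ entering $K_\eps$ depend on the integration variables $\omega_j$ themselves, so $K_\eps$ is not a pure function of $\xi$: handling this requires a Fubini-type exchange that works precisely because of the positivity of $\chi$, which is the ``simple observation'' alluded to in the introduction.
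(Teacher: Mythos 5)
Your skeleton matches the paper's: linearize the supremum, reduce to the endpoint $p=4/3$ (your interpolation with the trivial $L^1\rightarrow L^\infty$ bound is fine, and you even silently correct the paper's typo --- the endpoint exponent is $q=4\tfrac{d-1}{d+1}$, dual to the extension estimate into $L^4$, not $4\tfrac{d+1}{d-1}$), dualize to $\|\A^\ast g\|_{L^4}\lesssim\|g\|_{L^{q'}(\sigma)}$, and exploit the evenness of $4$ to expand into a multilinear form with the variable-scale kernel $K_\eps$. But there is a genuine gap precisely at the step you flag as ``the crux'' and then wave at. Your endgame needs to pair the $L^\infty$ density $\rho$ of the push-forward of $\prod_j|g(\omega_j)|\,\ud\sigma^4$ under $\omega\mapsto\xi=\omega_1+\omega_2-\omega_3-\omega_4$ against the $L^1$-normalized kernel $K_\eps$; this decoupling is impossible, because $K_\eps$ depends on all four $\omega_j$ through the scales $\eps(\omega_j)$ --- the very variables being pushed forward --- so $\int\prod_j|g(\omega_j)|K_\eps\,\ud\sigma^4$ is not of the form $\int\rho(\xi)K(\xi)\,\ud\xi$ for any single $K$. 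No ``Fubini-type exchange'' repairs this: the only pointwise way to remove the $\omega$-dependence is to majorize $K_\eps(\omega)\leq\sup_{s>0}\phi_s(\xi)$, where $\phi_s$ is the bump profile at scale $s$, and that supremum behaves like $|\xi|^{-d}$, which is not locally integrable near $\xi=0$, so pairing it with $\rho\in L^\infty$ diverges. Positivity of $\chi$ is also not the operative mechanism --- the paper's argument uses none.

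What actually closes the argument in the paper is to keep an arbitrary test function in play instead of taking absolute values into a self-contained quadrilinear integral: one computes $\langle\widehat{\A^\ast g}\ast\widehat{\A^\ast g},h\rangle$ for arbitrary $h\in L^2$, unwinds it by Fubini into $\int\int g(\omega)g(\omega')\,(\tilde h\ast\chi_{\eps(\omega)}\ast\chi_{\eps(\omega')})(\omega'-\omega)\,\ud\sigma(\omega)\,\ud\sigma(\omega')$ with $\tilde h(\xi)=\overline{h(-\xi)}$, and then uses the standard domination of convolution with $L^1$-normalized Schwartz bumps at \emph{arbitrary} scales by the Hardy--Littlewood maximal operator, $|(\tilde h\ast\chi_{\eps(\omega)}\ast\chi_{\eps(\omega')})(\omega'-\omega)|\lesssim M^2\tilde h(\omega'-\omega)$, uniformly in $\eps(\cdot)$. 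The order of operations is the point: pair against $h$ first, \emph{then} dominate, which converts the variable-scale kernel into a fixed function of $\omega'-\omega$; the cancellation-free Stein--Tomas bound \eqref{steintomas_combinatorial} then applies with $M^2\tilde h$ in place of $h$, and $\|M^2\tilde h\|_{L^2}\lesssim\|h\|_{L^2}$ finishes. (Your push-forward/$L^\infty$-density observation is correct and is essentially a reformulation of \eqref{steintomas_combinatorial}, so nothing is lost there; what is missing is this maximal-function step.) Note finally that the paper's route works for every Schwartz $\chi$ with $\int\chi=1$, as the proposition requires, whereas your specialization to a nonnegative Gaussian with nonnegative Fourier transform proves a strictly weaker statement and cannot be undone within a scheme that relies on positivity of $K_\eps$.
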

\begin{proof}
It suffices to prove the endpoint, that is $p=4/3$ and $q=4\frac{d+1}{d-1}$. Let 
\[ q_d := 4\frac{d+1}{d-1}. \]
By the Stein-Tomas theorem (see e.g. \cite{Stein}) one has that for the sphere $\mathbb{S}^{d-1}$ immersed in $\mathbb{R}^d$ it holds that for every $f \in L^{4/3}(\R^d)$
\begin{equation}\label{SteinTomas_estimate} 
\|\widehat{f}\|_{L^{q_d}(\S^{d-1}, \ud \sigma)} \lesssim \|f\|_{L^{4/3}(\R^d)}. 
\end{equation}
By duality this is equivalent to the estimate
\[ \|\widehat{g \ud \sigma}\|_{L^4(\R^d)}\lesssim \|g\|_{L^{q'_d}(\S^{d-1},\ud\sigma)}. \]
The numerology here is particularly fortunate since $4$ is an even exponent, which allows us to multilinearise and use Plancherel to write
\[ \|\widehat{g \ud \sigma}\|_{L^4(\R^d)}^2 = \|(\widehat{g \ud \sigma})^2\|_{L^2(\R^d)} = \|g\ud\sigma \ast g\ud\sigma\|_{L^2(\R^d)} \]
(here of course the $L^2$ norm on the right hand side has to be interpreted as the operator norm of the linear operator given by $h \mapsto \langle g\ud\sigma \ast g\ud\sigma, h\rangle$). Thus the Stein-Tomas type estimate \eqref{SteinTomas_estimate} can be stated equivalently in this case as 
\[ \|g\ud\sigma \ast g\ud\sigma\|_{L^2(\R^d)} \lesssim \|g\|_{L^{q'_d}(\S^{d-1},\ud\sigma)}^2, \]
which means
\begin{equation}\label{steintomas_combinatorial}
\Big| \int_{\S^{d-1}} \int_{\S^{d-1}} g(\omega)g(\omega') h(\omega' - \omega) \ud\sigma(\omega) \ud\sigma(\omega') \Big| \lesssim \|g\|_{L^{q'_d}(\S^{d-1},\ud\sigma)}^2 \|h\|_{L^2(\R^d)}.
\end{equation}
We linearise the maximal operator $\M$ by defining 
\[ \A f(\omega) := \int_{\R^d} \widehat{f}(\omega + y) \chi_{\eps(\omega)} (y) \ud y, \]
where $\eps(\cdot)$ an arbitrary measurable function that takes positive values. To bound $\M$ it suffices to bound $\A$ in the same range independently of $\eps(\cdot)$. The desired inequality
\[ \| \A f \|_{L^{q_d}(\S^{d-1},\ud\sigma)}\lesssim \|f\|_{L^{4/3}(\R^d)} \]
is equivalent by duality to the inequality 
\[ \|\A^\ast g\|_{L^4(\R^d)} \lesssim \|g\|_{L^{q'_d}(\S^{d-1},\ud\sigma)}, \]
where $\A^\ast$ is the formal adjoint of $\A$, which is given by 
\[ \A^\ast g(x) := \int_{\S^{d-1}} g(\omega) e^{i \omega\cdot x} \widehat{\chi}(\eps(\omega)x)\ud \sigma(\omega).\]
As before, this is equivalent to establishing 
\[ \|\widehat{\A^\ast g} \ast \widehat{\A^\ast g}\|_{L^2(\R^d)} \lesssim \|g\|_{L^{q'_d}(\S^{d-1},\ud\sigma)}^2. \]
First of all, observe that by Fubini's theorem
\begin{align*}
\widehat{\A^\ast g}(\xi) &= \int e^{-i x\cdot \xi} \int_{\S^{d-1}} g(\omega) e^{i \omega\cdot x} \widehat{\chi}(\eps(\omega)x)\ud \sigma(\omega) \ud x \\
&= \int_{\S^{d-1}} g(\omega) \int e^{-i x \cdot (\xi - \omega)} \widehat{\chi}(\eps(\omega)x) \ud x \ud \sigma(\omega) \\
&= \int_{\S^{d-1}} g(\omega) \chi_{\eps(\omega)}(\xi - \omega) \ud \sigma(\omega)
\end{align*}
(with a little abuse of notation).
Let then $h \in L^2(\R^d)$, so that by the above observation and multiple applications of Fubini's theorem we have the following chain of equalities:
\begin{align*}
\langle \widehat{\A^\ast g} &\ast \widehat{\A^\ast g}, h \rangle = \int \int \widehat{\A^\ast g}(\xi-\eta)\widehat{\A^\ast g}(\eta)\overline{h(\xi)} \ud \eta  \ud \xi \\
&= \int\int \int_{\S^{d-1}}\int_{\S^{d-1}} g(\omega)g(\omega') \chi_{\eps(\omega)}(\xi-\eta-\omega)\chi_{\eps(\omega')}(\eta-\omega') \ud\sigma(\omega) \ud\sigma(\omega') \overline{h(\xi)} \ud \eta  \ud \xi \\
&= \int \int_{\S^{d-1}}\int_{\S^{d-1}} g(\omega)g(\omega')\overline{h(\xi)} \Big(\int \chi_{\eps(\omega)}(\xi-\eta-\omega)\chi_{\eps(\omega')}(\eta-\omega') \ud\eta\Big) \ud\sigma(\omega) \ud\sigma(\omega') \ud \xi \\
&= \int \int_{\S^{d-1}}\int_{\S^{d-1}} g(\omega)g(\omega')\overline{h(\xi)} (\chi_{\eps(\omega)}\ast \chi_{\eps(\omega')})(\xi+\omega'-\omega) \ud\sigma(\omega) \ud\sigma(\omega') \ud \xi \\
&= \int_{\S^{d-1}}\int_{\S^{d-1}} g(\omega)g(\omega') \int \tilde{h}(-\xi) (\chi_{\eps(\omega)}\ast \chi_{\eps(\omega')})(\xi+\omega'-\omega)\ud \xi \ud\sigma(\omega) \ud\sigma(\omega')  \\
&= \int_{\S^{d-1}}\int_{\S^{d-1}} g(\omega)g(\omega') (\tilde{h}\ast\chi_{\eps(\omega)}\ast \chi_{\eps(\omega')})(\omega'-\omega) \ud\sigma(\omega) \ud\sigma(\omega'),
\end{align*} 
where $\tilde{h}(\xi) = \overline{h(-\xi)}$. But then we have that pointwise 
\[ |(\tilde{h}\ast\chi_{\eps(\omega)}\ast \chi_{\eps(\omega')})(\omega'-\omega)| \lesssim M^2 \tilde{h}(\omega' - \omega), \]
with constant depending only on the choice of $\chi$, where $M$ is the Hardy-Littlewood maximal function; therefore by the Stein-Tomas restriction estimate \eqref{steintomas_combinatorial} we have
\begin{align*}
|\langle \widehat{\A^\ast g} \ast \widehat{\A^\ast g}, h \rangle| &\lesssim   \int_{\S^{d-1}}\int_{\S^{d-1}} |g(\omega)||g(\omega')| M^2 \tilde{h}(\omega' - \omega) \ud\sigma(\omega) \ud\sigma(\omega') \\
&\lesssim \|g\|_{L^{q'_d}(\S^{d-1},\ud\sigma)}^2 \|M^2 \tilde{h}\|_{L^2(\R^d)}\lesssim \|g\|_{L^{q'_d}(\S^{d-1},\ud\sigma)}^2 \|h\|_{L^2(\R^d)},
\end{align*}
which proves the desired estimate for $\A$.
\end{proof}

\begin{remark}
It is interesting to notice that the critical endpoint for Fourier restriction to curves in $\R^2$ is $L^{4/3} \rightarrow L^{4/3}$, and we know that the corresponding (even restricted) strong type estimate is false by work of  Beckner, Carbery, Semmes and Soria \cite{BecknerCarberySemmesSoria}. Thus the proof above barely misses to adapt to case $d=2$.
\end{remark}

Finally, we prove the second half of Theorem \ref{main_theorem}, restated below.
\begin{proposition}\label{Lebesgue_Points_sphere}
Let $1 \leq p \leq 8/7$. If $f\in L^p(\R^d)$ then $\sigma$-a.e. point of $\S^{d-1}$ is a Lebesgue point for $\widehat{f}$.
\end{proposition}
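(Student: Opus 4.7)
The plan is to apply Proposition 2.1 twice: once to $f$ directly, producing a candidate value $L(\omega)$ for $\widehat f$ at $\sigma$-a.e.\ $\omega$; and once to the autocorrelation $f*\tilde f$, where $\tilde f(x):=\overline{f(-x)}$, in order to promote this weak convergence to the genuine Lebesgue-point condition. Throughout I fix a nonnegative Schwartz bump $\chi$ with $\int\chi=1$ and $\chi\ge c\mathbf{1}_{B_1}$ for some $c>0$, and note that the proof of Proposition 2.1 uses no sign assumption on $\chi$ and applies verbatim. Since $\chi_\eps(y)\gtrsim|B_\eps|^{-1}\mathbf{1}_{B_\eps}(y)$, it suffices to show that for $\sigma$-a.e.\ $\omega$ there exists $L(\omega)$ with $\int\chi_\eps(y)|\widehat f(\omega+y)-L(\omega)|\,dy\to 0$ as $\eps\to 0^+$.

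Setting $T_\eps f(\omega):=\int\widehat f(\omega+y)\chi_\eps(y)\,dy$, one obtains existence of $L(\omega):=\lim_{\eps\to 0^+}T_\eps f(\omega)$ $\sigma$-a.e.\ by a standard oscillation argument. For Schwartz $g$, $T_\eps g(\omega)\to\widehat g(\omega)$ at every point, so writing $f=g+(f-g)$ gives
\[\limsup_{\eps,\delta\to 0^+}|T_\eps f(\omega)-T_\delta f(\omega)|\le 2\M(f-g)(\omega),\]
whose $L^q(\sigma)$ norm is $\lesssim\|f-g\|_{L^p}$ by Proposition 2.1. Density of $\mathcal S(\R^d)$ in $L^p$ then forces the oscillation to vanish $\sigma$-a.e.

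The main step upgrades this to a Lebesgue-point statement. Decompose $f=g+h$ with $g$ Schwartz and $\|h\|_{L^p}$ small, let $L_h(\omega):=\lim_\eps T_\eps h(\omega)$, and estimate
\[\int\chi_\eps|\widehat f(\omega+y)-L(\omega)|\,dy\le\int\chi_\eps|\widehat g(\omega+y)-\widehat g(\omega)|\,dy+\int\chi_\eps|\widehat h(\omega+y)|\,dy+|L_h(\omega)|.\]
The first term tends pointwise to $0$ by continuity of $\widehat g$, and $|L_h(\omega)|\le\M h(\omega)$. For the middle term, Cauchy--Schwarz together with $\int\chi_\eps=1$ yields
\[\int\chi_\eps|\widehat h(\omega+y)|\,dy\le\Bigl(\int\chi_\eps(y)\,|\widehat h(\omega+y)|^2\,dy\Bigr)^{1/2},\]
and the bilinear identity $|\widehat h|^2=\widehat{h*\tilde h}\ge 0$ collapses the supremum over $\eps$ of the inner integral into $\M(h*\tilde h)(\omega)$, a linear maximal restriction of a single function. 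By Young's convolution inequality, $\|h*\tilde h\|_{L^{p/(2-p)}}\le\|h\|_{L^p}^2$, and $p/(2-p)\le 4/3$ exactly when $p\le 8/7$; this is the threshold at which $h*\tilde h$ enters the range of Proposition 2.1.

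Consequently both $\M h$ and $(\M(h*\tilde h))^{1/2}$ have $L^s(\sigma)$ norms $\lesssim\|h\|_{L^p}$ for some $s>0$ (finiteness of $\sigma(\S^{d-1})$ lets one shrink to a common exponent), and so does the pointwise quantity $\limsup_{\eps\to 0^+}\int\chi_\eps|\widehat f(\omega+y)-L(\omega)|\,dy$. Sending $\|h\|_{L^p}\to 0$ forces this $\limsup$ to vanish $\sigma$-a.e., giving the Lebesgue-point conclusion. The main obstacle is conceptual rather than technical: recognising $f*\tilde f$ as the right object on which to deploy Proposition 2.1, since a direct linear restriction bound on $\widehat h$ alone could never control the nonlinear $L^1$ mollified average appearing in the Lebesgue-point definition—and this is precisely what fixes the endpoint at $p=8/7$ via Young's inequality.
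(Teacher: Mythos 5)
Your proof is correct and takes essentially the same route as the paper: the engine in both is the Cauchy--Schwarz step, the identity $|\widehat{h}|^2 = \widehat{h \ast \tilde{h}}\geq 0$, Young's inequality (forcing $s = p/(2-p) \leq 4/3$, i.e. $p \leq 8/7$), and an application of Proposition \ref{max_estimate_proposition} to the autocorrelation. The differences are purely organizational---you build the limiting value $L(\omega)$ by an oscillation argument and run the quadratic trick on the error $h = f-g$, whereas the paper proves an $L^p \to L^q$ bound for the positive maximal operator $\M^{+}$ on all of $L^p$ and identifies the limit with $\mathcal{R}f$ via Stein--Tomas; you also make explicit the harmless choice $\chi \geq c\mathbf{1}_{B_1}$ that the paper leaves implicit in the pointwise bound $\M^{+}f \lesssim (\M h)^{1/2}$.
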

\begin{proof}
The proof that follows is taken from \cite{MullerRicciWright} and has been included only for the reader's convenience.\\
Let $\mathcal{R}$ denote the Fourier restriction operator to the sphere $\S^{d-1}$. Let $\M^{+}$ denote the positive maximal Fourier restriction operator associated to the sphere $\S^{d-1}$, defined as 
\[ \M^{+} f(\omega) := \sup_{\eps>0} \frac{1}{\eps^d}\int_{|y|\leq \eps} |\widehat{f}(\omega + y)| \ud y. \] 
To prove the proposition it suffices to show that 
\begin{equation}\label{pos_max_estimate}
\| \M^{+} f \|_{L^q (\S^{d-1},\ud\sigma)} \lesssim_{p,q} \|f\|_{L^p(\R^d)}
\end{equation}
for $1\leq p \leq 8/7$ and $p' \geq q \frac{d+1}{d-1}$. Indeed, assuming this holds, one can define 
\[ F(\omega) := \limsup_{\eps\rightarrow 0} \frac{1}{\eps^d} \int_{|y|\leq \eps} |\widehat{f}(\omega + y) - \mathcal{R}f(\omega)| \ud y; \]
since $\mathcal{R}\varphi = \widehat{\varphi}|_{\S^{d-1}}$ for any $\varphi \in \mathscr{S}(\R^d)$, we have 
\begin{align*}
F(\omega)& \leq \limsup_{\eps\rightarrow 0} \frac{1}{\eps^d} \int_{|y|\leq \eps} |\widehat{f - \varphi}(\omega + y)| \ud y + |\mathcal{R}(f - \varphi)(\omega)| \\
& \leq \M^{+}(f-\varphi)(\omega) + |\mathcal{R}(f - \varphi)(\omega)|.
\end{align*}
By the Stein-Tomas estimate and \eqref{pos_max_estimate} it follows then that 
\[ \|F\|_{L^q(\S^{d-1},\ud\sigma)} \lesssim \|f-\varphi\|_{L^p(\R^d)} \]
in the given range, and by taking $\varphi$ to be an approximant of $f$ in $L^p$ norm we see that $\|F\|_{L^q(\S^{d-1},\ud\sigma)} = 0$ or equivalently that $F = 0$ $\sigma$-a.e., which proves the proposition. Thus it suffices to prove \eqref{pos_max_estimate}, and in particular it suffices to prove it under the assumption that $p' = q \frac{d+1}{d-1}$. This will follow from Proposition \ref{max_estimate_proposition}.\\
Observe that by H\"{o}lder's inequality we have 
\[ \frac{1}{\eps^d}\int_{|y|\leq \eps} |\widehat{f}(\omega + y)| \ud y \lesssim \Big(\frac{1}{\eps^d}\int_{|y|\leq \eps} |\widehat{f}(\omega + y)|^2 \ud y\Big)^{1/2}; \]
let then $h := f \ast \tilde{f}$, so that 
\[ \widehat{h} = |\widehat{f}|^2, \]
and we have
\[ \M^{+} f \lesssim (\M h)^{1/2}\]
pointwise. Let $s$ be such that $s \leq 4/3$ and
\[ \frac{q}{2}\frac{d+1}{d-1} = s'; \]
by Proposition \ref{max_estimate_proposition} we have then 
\begin{align*}
 \|\M^{+} f\|_{L^q(\S^{d-1},\ud\sigma)} &\lesssim \|\M h\|_{L^{q/2}(\S^{d-1},\ud\sigma)}^{1/2} \lesssim \|h\|_{L^s(\R^d)}^{1/2} \\
 & \leq \|f\|_{L^p(\R^d)},  
\end{align*}
where $1 + \frac{1}{s} = \frac{2}{p}$ and the last inequality is an application of Young's inequality. Thus it follows that 
\[ p' = 2s' = q \frac{d+1}{d-1}, \]
as desired. Since $s\leq 4/3$, we see that we can only afford $p \leq 8/7$, and this concludes the proof. 
\end{proof}

\bibliography{partial_stein_tomas_maximal_restriction}
\bibliographystyle{amsplain}

\end{document}